\def\titlerunning#1{\gdef\titrun{#1}}
\def\author#1{\gdef\autrun{\def\and{\unskip, }#1}\gdef\@author{#1}}
\def\address#1{{\def\and{\\\hspace*{18pt}}\renewcommand{\thefootnote}{}%
\footnote {#1}}%
\markboth{\autrun}{\titrun}}
\def\email#1{\hspace*{4pt}{\em e-mail}: #1}
\def\MSC#1{{\renewcommand{\thefootnote}{}%
\footnote{\emph{Mathematics Subject Classification (2020):} #1}}}
\def\keywords#1{\par\medskip
\noindent\textbf{Keywords:} #1}
\newtheorem{theorem}{Theorem}[section]
\newtheorem{prop}[theorem]{Proposition}
\newtheorem{cor}[theorem]{Corollary}
\newtheorem{lemma}[theorem]{Lemma}
\newtheorem{conj}[theorem]{Conjecture}
\theoremstyle{definition}
\newtheorem{remark}[theorem]{Remark}
\numberwithin{equation}{section}
\def\0{\mathbf 0}
\def\F{{\mathbb F}}
\begin{document}


\baselineskip=16pt


\titlerunning{}

\title{Zeros of special polynomials and their impact on a class of APN functions}

\author{Daniele Bartoli \and Marco Calderini \and Giuseppe Marino \and Francesco Pavese}

\date{}

\maketitle

\address{D. Bartoli: Department of Mathematics and Informatics, University of Perugia, Perugia, Italy; \email{daniele.bartoli@unipg.it}
\and
M. Calderini: Department of Mathematics, University of Trento, Trento, Italy; \email{marco.calderini@unitn.it}
\and
G. Marino: Department of Mathematics and Applications ``R. Caccioppoli'', University of Naples ``Federico II'', Naples, Italy; \email{giuseppe.marino@unina.it}
\and
F. Pavese: Department of Mechanics, Mathematics and Management, Polytechnic University of Bari, Bari, Italy; \email{francesco.pavese@poliba.it}
}

\bigskip

\MSC{Primary 51E20; Secondary 05B25}


\begin{abstract}
In 2021, Calderini et al. introduced a construction for APN functions on $\mathbb{F}_{2^{2m}}$ in bivariate form 
\begin{align*}
& f(x,y)=\big(xy,\, x^{2^r+1} + x^{2^{r+m/2}} y^{2^{m/2}} + bxy^{2^r} + cy^{2^r+1}\big) & r < m/2, \gcd(r, m) = 1. 
\end{align*}
They showed that this family exists provided the existence of a polynomial 
\begin{align*}
P_{c,b}(X)=(cX^{2^r +1} + b X^{2^r}+1)^{2^{m/2}+1}+X^{2^{m/2}+1}, 
\end{align*}
with no zeros in $\mathbb{F}_{2^{2m}}$.
For $m\le 6$ it was shown that we can have APN functions belonging to this family. However, up to now, no construction of such polynomials is known for $m\ge 8$. In this work we provide a non-existence result of such functions whenever $r<m/8-1$, by application of techniques from algebraic varieties over finite fields.
In particular, for $r=1$ we have that the construction of Calderini et al. cannot provide an APN function for $m\ge 8$.

\keywords{APN functions, Bivariate construction, Zeros of polynomials}
\end{abstract}

\section{Introduction}

Let $\mathbb{F}_{2^n}$ be the finite field with $2^n$ elements. A function $f$ from $\mathbb{F}_{2^n}$ into itself is called almost perfect nonlinear (APN) if for any non zero $a\in \mathbb{F}_{2^n}$ and any $b\in \mathbb{F}_{2^n}$, the equation 
$$
f(x+a)+f(x)=b
$$
admits at most two solutions.

APN functions play a central role in modern cryptography since they provide optimal resistance against differential cryptanalysis (\cite{diffBS}) when used as substitution boxes in block ciphers. Beyond cryptography, they also appear as optimal objects in coding theory, combinatorics, and projective geometry \cite{semibi,DO,hypDE}. 

Despite their importance, only a few infinite families of APN functions are currently known, and their classification up to CCZ- or EA-equivalence remains an open problem (see \cite{LK} for a list of known APN families and for the definition of these equivalence relations).



Several of the known families that can be defined in even dimension have been obtained using the so called bivariate construction introduced by Carlet in \cite{carletbiv}. In particular, let $n=2m$, we can decompose $\mathbb{F}_{2^n}$ as $\mathbb{F}_{2^m}\times\mathbb{F}_{2^m}$, and a function from $\mathbb{F}_{2^m}\times\mathbb{F}_{2^m}$ into itself can be represented using bivariate polynomials, that is, $f(x,y)=(f_1(x,y),f_2(x,y))$ with $f_1,f_2\in\mathbb{F}_{2^m}[x,y]$.

In \cite{carletbiv}, the author considered functions $f(x,y)$ where $f_1(x,y)$ was given by the Maiorana-McFarland function $xy$, and provided some necessary and sufficient conditions for the APN property of $f(x,y)$. He also introduced a class of APN function in bivariate form which was later proved (see \cite{APN}) to be equivalent to the hexanomial family constructed in \cite{BChex}.
The bivariate construction was later used for obtaining other classes of APN functions (\cite{APN,tan,ZP}). Recently, in \cite{Golbiproj}, G\"olo\u{g}lu
proposed a generalization of the bivariate construction based on the so-called biprojective polynomials. Bi-projective polynomials has been used for constructing several classes of APN functions lately \cite{CLVbiproj,GKbiproj,LZLQ}.

Within specific families, the APN property is intrinsically connected to the existence of polynomials with well-defined structural properties. Accordingly, a fundamental problem is to determine whether APN functions derived from these constructions exist in infinitely many dimensions or whether they are restricted to finitely many instances \cite{BCPZ,Bhex,BTThex,Golhex}.

In particular, the existence of several classes of bivariate APN families constructed to date relies on the fact that a certain projective polynomial, that is a polynomial of type $x^{2^r+1}+x+a$, admits no roots over $\mathbb{F}_{2^m}$ \cite{CLVbiproj,carletbiv,tan}.

Projective polynomials and their roots have been studied in several works, such as \cite{Bluher,BTThex,HKproj1,HKproj2}.
So, applying Bluher’s results (\cite{Bluher}), one obtains that these constructions yield an APN function in every dimension in which they are defined.

 For the case of the APN class introduced in \cite{APN}, the existence of instances coming from this construction is related to the roots of a certain polynomial, which is not projective.

In particular, the APN class given in \cite{APN} is the following:
\begin{theorem}\label{Th:APN}\cite[Theorem 6.2]{APN}
Let $n = 2m$ with $m$ even, and let $r < m/2$ be such that $\gcd(r,m) = 1$.  Consider $b,c \in \mathbb{F}_{2^{m}}$
Then
\[
f_{b,c,r}(x,y) = \big(xy,\, x^{2^r+1} + x^{2^{r+m/2}} y^{2^{m/2}} + bxy^{2^r} + cy^{2^r+1}\big)
\]
is APN if and only if
\[
P_{c,b}(X)=\big(cX^{2^r+1} + bX^{2^r} + 1\big)^{2^{m/2}+1} + X^{2^{m/2}+1}
\]
has no zero in $\mathbb{F}_{2^m}$.
\end{theorem}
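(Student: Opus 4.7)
The approach is to exploit the Dembowski--Ostrom structure of $f_{b,c,r}$ --- every summand has $\F_2$-algebraic degree two --- and translate the APN condition into a polynomial one. Since
\[
\Delta_{(a_1,a_2)} f(x,y) := f(x+a_1,y+a_2) + f(x,y) + f(a_1,a_2) + f(0,0)
\]
is $\F_2$-linear in $(x,y)$, APN is equivalent to $\ker \Delta_{(a_1,a_2)} f = \{(0,0),(a_1,a_2)\}$ for every nonzero $(a_1,a_2) \in \F_{2^m}^2$. I would organize the proof around computing this kernel directly.

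The first component of $\Delta f$, coming from $xy$, yields $xa_2 = a_1 y$. In the edge case $a_1 = 0,\, a_2 \neq 0$ this forces $x = 0$ and reduces the second component of $\Delta f = 0$ to $c(y^{2^r}a_2 + y a_2^{2^r}) = 0$; since $\gcd(r,m) = 1$, the solution set is $\{0,a_2\}$ exactly when $c \neq 0$, so $c \neq 0$ is necessary for APN and may be assumed (the degenerate case $c = 0$ being compatible with the claim). For the generic case $a_1 a_2 \neq 0$, substituting $y = (a_2/a_1)x$ and $t = x/a_1$ collapses the second-component equation into
\[
u\bigl(a_1^{2^r+1} + b\,a_1 a_2^{2^r} + c\,a_2^{2^r+1}\bigr) + u^{2^{m/2}} a_1^{2^{r+m/2}} a_2^{2^{m/2}} = 0,
\]
where $u = t + t^{2^r}$. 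A nontrivial kernel element corresponds to a nonzero $u$ satisfying this relation and lying in the image of $t \mapsto t+t^{2^r}$, which (using $\gcd(r,m) = 1$) is the trace-zero hyperplane of $\F_{2^m}$.

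Setting $z = a_1/a_2$ and $v = u\,a_2^{2^r+1}$ rewrites the relation as
\[
v^{2^{m/2}-1} = \frac{z^{2^r+1} + bz + c}{z^{2^{r+m/2}}}.
\]
Raising to the $(2^{m/2}+1)$-st power, using $v^{2^m-1} = 1$ for $v \in \F_{2^m}^*$, and substituting $Y = 1/z$ reduces the condition precisely to $P_{c,b}(Y) = 0$. This gives one direction: if $P_{c,b}$ has no zero in $\F_{2^m}$, no valid nonzero $v$ exists and APN holds.

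The main obstacle lies in the converse. A root $Y$ of $P_{c,b}$ determines $v$ only up to multiplication by $\F_{2^{m/2}}^*$, and one must then produce $a_2 \in \F_{2^m}^*$ so that $u = v/a_2^{2^r+1}$ falls in the trace-zero hyperplane; otherwise the algebraic $v$ does not descend to an honest $t \in \F_{2^m}$. I would close this gap by a counting argument: as $a_2$ varies, $a_2^{2^r+1}$ sweeps a multiplicative subgroup of index $\gcd(2^r+1, 2^m-1)$ in $\F_{2^m}^*$, which under the hypotheses ($m$ even, $\gcd(r,m)=1$) equals $1$ or $3$; together with the $(2^{m/2}-1)$-fold freedom in $v$, the resulting collection of candidate $u$ is large enough to intersect the trace-zero hyperplane, producing a genuine witness to the failure of APN and closing the equivalence.
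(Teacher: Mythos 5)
First, a point of comparison: the paper does not prove this statement at all --- it is quoted verbatim from \cite[Theorem 6.2]{APN} --- so your argument can only be judged on its own merits. Your route (treating $f_{b,c,r}$ as a Dembowski--Ostrom polynomial, solving the first component along $xa_2=a_1y$, reducing the second component to a relation in $u=t+t^{2^r}$, identifying the image of $t\mapsto t+t^{2^r}$ with the trace-zero hyperplane via $\gcd(r,m)=1$, and matching the resulting norm condition with $P_{c,b}(1/z)=0$) is the natural one, and I verified the displayed reductions: they are correct. Two steps, however, are genuine gaps rather than routine omissions.

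\emph{The case $c=0$.} You rightly note that $c\neq 0$ is necessary for APN, but for the stated equivalence to hold you must additionally prove that $P_{0,b}$ has a zero in $\mathbb{F}_{2^m}$ for every $b$; otherwise some pair $(b,0)$ could have $f$ non-APN while $P_{0,b}$ is rootless, falsifying the ``if'' direction. This is not automatic and is exactly the content of the paper's Section~3: for $b\neq 0$ one reduces $(bX^{2^r}+1)^{q+1}+X^{q+1}=0$ to $x^{2^r}+x=\bar t^{-2^r}$ using $\gcd(2^r-1,q-1)=1$ and Hilbert~90. Your parenthetical ``compatible with the claim'' asserts precisely what needs proving.

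\emph{The closing counting argument.} A subset of $\mathbb{F}_{q^2}^*$ of density about $1/3$ need not meet the trace-zero hyperplane minus the origin on cardinality grounds alone, so ``large enough to intersect'' is not a proof. What actually closes the converse is structure, not size: for a fixed $a_2$ the admissible $u=v/a_2^{2^r+1}$ with $v\in v_0\mathbb{F}_q^*$ form a single coset $w\mathbb{F}_q^*$, and the identity $\mathrm{Tr}_{\mathbb{F}_{q^2}/\mathbb{F}_2}(w\lambda)=\mathrm{Tr}_{\mathbb{F}_q/\mathbb{F}_2}\bigl(\lambda(w+w^q)\bigr)$ for $\lambda\in\mathbb{F}_q$ shows that every such coset contains either $q-1$ or $q/2-1\geq 1$ trace-zero elements, since $1\le r<m/2$ forces $q=2^{m/2}\geq 4$. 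In particular you may simply take $a_2=1$: the extra freedom in $a_2$, and the value of $\gcd(2^r+1,2^m-1)$ (which is exactly $3$ under the hypotheses, not ``$1$ or $3$''), are irrelevant. Finally, the symmetric edge case $a_2=0\neq a_1$ should at least be recorded, though it is immediate.
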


The authors showed that for $n\le 12$ (so $m\le 6$) it was possible to produce new APN functions (up to CCZ-equivalence). However, if such functions exist also for higher dimensions is an open problem.

The aim of this work is to investigate such an open question. In particular, we prove that for each $r <m/8-1$ there are no instances of $b,c \in \mathbb{F}_{2^{m}}$ for which $f_{b,c,r}(x,y)$ is APN.

The main tool is given by application of techniques from algebraic varieties over finite fields. 

Denote $q=2^{m/2}$. 
First observe that $P_{c,b}(X)$ has a zero in $\mathbb{F}_{q^2}$ if and only if there exists $x\in \mathbb{F}_{q^2}^*$ such that 
\begin{align}
\frac{cx^{2^r+1} + bx^{2^r} + 1}{x}  \label{eq0}
\end{align}
is a $(q+1)$-root of unity. This is equivalent to ask that 
\begin{align*}
\frac{cx^{2^r+1} + bx^{2^r} + 1}{x}=\frac{x^q}{c^qx^{(2^r+1)q} + b^qx^{2^rq} + 1}.
\end{align*}
Let $x=x_0+\xi x_1$, where $\{1,\xi\}$ is an $\mathbb{F}_q$ basis of $\mathbb{F}_{q^2}$ and $x_0,x_1 \in \mathbb{F}_q$. The previous condition (since $x\neq 0$) can be equivalently rewritten as 
\begin{align*}
\left(c(x_0+\xi x_1)^{2^r+1} + b(x_0+\xi x_1)^{2^r} + 1\right)\left(c^q(x_0+\xi^q x_1)^{2^r+1} + b^q(x_0+\xi^q x_1)^{2^r} + 1\right)+(x_0+\xi x_1)(x_0+\xi^q x_1)=0.
\end{align*}
In order to prove that for each $b,c \in \mathbb{F}_{q^2}$ there is at least a solution $(\overline{x_0},\overline{x_1}) \in \mathbb{F}_q^2$ to the above equation, we consider the algebraic curve $\mathcal{D}_{b,c,r}$ defined by
\begin{align*}
\left(c(X+\xi Y)^{2^r+1} + b(X+\xi Y)^{2^r} + 1\right)\left(c^q(X+\xi^q Y)^{2^r+1} + b^q(X+\xi^q Y)^{2^r} + 1\right)+(X+\xi Y)(X+\xi^q Y)=0.
\end{align*}
Via the change of variables $(X+\xi Y,X+\xi^q Y)\mapsto (X,Y)$, $\mathcal{D}_{b,c,r}$ is affinely equivalent to the plane curve 
$\mathcal{C}_{b,c,r}$ defined by
\begin{align*}
\left(cX^{2^r+1} + bX^{2^r} + 1\right)\left(c^qY^{2^r+1} + b^qY^{2^r} + 1\right)+XY=0.
\end{align*}
Our strategy consists in proving that $\mathcal{C}_{b,c,r}$, $b, c \in \F_{q^2}$, $c \ne 0$, $r \ge 1$, is absolutely irreducible and so is $\mathcal{D}_{b,c,r}$. Hence, by the Hasse-Weil bound we obtain the existence of at least one point $(\overline{x_0},\overline{x_1}) \in \mathbb{F}_q^2$ in $\mathcal{D}_{b,c,r}$. The case $c = 0$ is treated separataly. Therefore, by Theorem \ref{Th:APN} the function $f_{b,c,r}(x,y)$ is not APN.

\section{Preliminary results}

We now recall some basic facts on curves/surfaces over (finite) fields. For more details, we refer  to~\cite{fulton,HKT}, or the reader's favorite algebraic geometry book. As customary, for a field $\F$, we denote by $\overline{\F}$ its algebraic closure, and by $\mathbb{P}^m(\F)$ (respectively, $\mathbb{A}^m(\F)$)  the $m$-dimensional projective (respectively, affine) space over the field $\F$. Solutions of one or more polynomial equations form what we call algebraic hypersurfaces or varieties. An algebraic hypersurface defined over a field $\mathbb{F}$ is called absolutely irreducible if the associated polynomial is irreducible over every algebraic extension of $\mathbb{F}$. An absolutely irreducible $\mathbb{F}$-rational component of a hypersurface defined by a polynomial $F$ is  an absolutely irreducible hypersurface, associated to a factor of $F$ defined over $\overline{\F}$.

In two dimensions, $\mathcal{C}$ is an affine curve over a field $\F$ if it is the zero set of a polynomial $F(X,Y)\in\F[X,Y]$. A projective curve $\mathcal{C}$  over a field $\F$ is the zero set of a homogeneous polynomial $F(X,Y,Z)\in\F[X,Y,Z]$.  The polynomial $F$ is the defining polynomial of $\mathcal{C}$.

Finally, we will make use of the following version of the celebrated Hasse-Weil theorem. 
\begin{theorem}[Aubry-Perret bound \textup{\cite[Corollary 2.5]{AubryPerret}}]
\label{Th:AubryPerret}
Let $\mathcal{C}\subset \mathbb{P}^{n}(\mathbb{F}_q)$ be an absolutely irreducible curve which is a complete intersection of $(n-1)$ hypersurfaces of degrees $d_1,\ldots, d_{n-1}$ and set $d=\prod_{i=1}^{n-1}d_i$. Then the number $\mathcal{C}(\mathbb{F}_q)$ of $\mathbb{F}_q$-rational points of $\mathcal{C}$ in $ \mathbb{P}^{n}(\mathbb{F}_q)$ satisfies

\vspace*{-0.5 cm}
\begin{equation}\label{EQ:HW3}
q+1-(d-1)(d-2)\sqrt{q}\leq \#\mathcal{C}(\mathbb{F}_q)\leq q+1+(d-1)(d-2)\sqrt{q}.
\end{equation}
\end{theorem}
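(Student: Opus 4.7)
The plan is to reduce the statement to the classical Hasse-Weil bound on smooth projective curves by (i) passing to the normalization of $\mathcal{C}$, (ii) bounding the discrepancy between the rational points of $\mathcal{C}$ and those of its normalization in terms of the gap between the arithmetic and geometric genera, and (iii) using the complete-intersection hypothesis to bound the arithmetic genus in terms of $d$.

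First, let $\nu:\tilde{\mathcal{C}}\to\mathcal{C}$ denote the normalization, so that $\tilde{\mathcal{C}}$ is a smooth, projective, absolutely irreducible curve defined over $\mathbb{F}_q$ whose geometric genus $g$ coincides with that of $\mathcal{C}$. The classical Hasse-Weil theorem applied to $\tilde{\mathcal{C}}$ gives $|\#\tilde{\mathcal{C}}(\mathbb{F}_q)-(q+1)|\le 2g\sqrt{q}$. Since $\nu$ is an isomorphism outside the (finite) singular locus, any discrepancy between $\#\mathcal{C}(\mathbb{F}_q)$ and $\#\tilde{\mathcal{C}}(\mathbb{F}_q)$ comes from $\mathbb{F}_q$-rational singular points. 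At each such point $P$, the number $r_P$ of $\mathbb{F}_q$-rational preimages in $\tilde{\mathcal{C}}$ satisfies $|r_P-1|\le \delta_P$, where $\delta_P$ is the local $\delta$-invariant; summing over $P$ and using the identity $\pi_a-g=\sum_P\delta_P$ (with $\pi_a$ the arithmetic genus of $\mathcal{C}$) yields $|\#\mathcal{C}(\mathbb{F}_q)-\#\tilde{\mathcal{C}}(\mathbb{F}_q)|\le \pi_a-g$. Combining the two inequalities and using $\pi_a-g\le 2(\pi_a-g)\sqrt{q}$ (valid for $q\ge 1$) gives
$$|\#\mathcal{C}(\mathbb{F}_q)-(q+1)|\le 2\pi_a\sqrt{q}.$$

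It remains to bound $\pi_a$ in terms of $d$. By adjunction for complete intersections in $\mathbb{P}^n$, the dualizing sheaf of $\mathcal{C}$ is $\omega_{\mathcal{C}}=\mathcal{O}_{\mathcal{C}}(\sum d_i-n-1)$, hence
$$2\pi_a-2=d\Bigl(\sum_{i=1}^{n-1}d_i-n-1\Bigr).$$
One then checks that $\sum d_i-n+2\le d$: assuming each $d_i\ge 2$ (a degree-$1$ factor can be removed, simultaneously reducing $n$ and leaving both sides unchanged), a short induction on the number of factors gives $\prod d_i\ge \sum d_i-n+2$. Substituting back yields $2\pi_a\le (d-1)(d-2)$ and therefore the claimed double inequality.

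The main obstacle is the step $|r_P-1|\le \delta_P$ and its global assembly in the first paragraph: an $\mathbb{F}_q$-rational singular point of $\mathcal{C}$ may have \emph{no} $\mathbb{F}_q$-rational preimage in $\tilde{\mathcal{C}}$ (all its branches being Galois conjugate), and reassembling these local defects into a clean bound by $\pi_a-g$ is precisely the refinement over the classical smooth-curve Weil bound supplied by Aubry and Perret.
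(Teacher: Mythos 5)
This statement is not proved in the paper: it is quoted verbatim as a known result from the cited reference (Aubry--McGuire--Rodier, building on Aubry--Perret's Weil bound for singular curves), so there is no in-paper proof to compare against. Your reconstruction is essentially the argument of that literature and is correct: Hasse--Weil on the normalization, the estimate $|\#\mathcal{C}(\mathbb{F}_q)-\#\tilde{\mathcal{C}}(\mathbb{F}_q)|\le \pi_a-g$ via local $\delta$-invariants, the absorption $2g\sqrt{q}+(\pi_a-g)\le 2\pi_a\sqrt{q}$, and the complete-intersection adjunction $2\pi_a-2=d(\sum d_i-n-1)$ together with $\prod d_i\ge 1+\sum(d_i-1)$ to get $2\pi_a\le(d-1)(d-2)$. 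Two small points of precision: the identity $\pi_a-g=\sum_P\delta_P$ runs over \emph{all} singular closed points (weighted by their degree), so summing only over the $\mathbb{F}_q$-rational ones yields an inequality $\sum_{P\ \mathrm{rational}}\delta_P\le\pi_a-g$, which is all you use; and the ``obstacle'' you flag at the end (a rational singular point with no rational preimage, $r_P=0$) is already covered by your own inequality $|r_P-1|\le\delta_P$, since $\delta_P\ge 1$ at any singular point. So the worry you raise is not a gap in your argument, just the observation that this bookkeeping is exactly where the singular-curve refinement of Weil's bound lives.
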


\section{The case $c=0$}

In this section we will consider the case $c=0$ and prove that for any $b\in\mathbb{F}_{q^2}^*$
the polynomial $P_{0,b}(X)=\big(bX^{2^r} + 1\big)^{q+1} + X^{q+1}$ has a zero in $\mathbb{F}_{q^2}$, and thus,
we cannot have an APN function of the form $f_{b, 0, r}$, where $f_{b,c,r}$ is as in Theorem \ref{Th:APN}.

For this purpose let us denote $\mu_{q+1}:=\{x^{q-1}\,:\,x\in \mathbb{F}_{q^2}^*\}$ the set of $(q+1)$th root of unity in $\mathbb{F}_{q^2}$. Any element $b$ of $\mathbb{F}_{q^2}^*$ can be uniquely represented
as $b=ut$ for some $t\in\mathbb{F}_{q}^*$ and $u \in \mu_{q+1}$.

Let us recall the following well-known result, which comes from the Hilbert’s Theorem 90.

\begin{lemma}\label{lm:H90}
Let $\alpha\in\mathbb{F}_{2^m}$ and let $j$ be such that $\gcd(j,m)=1$. Then, $Tr_{2}^{2^m}(\alpha)=0$ if and only if there exists $\beta\in\mathbb{F}_{2^m}$ such that $\alpha=\beta^{2^j}-\beta$. 
Here $Tr_{2}^{2^m}$ is the trace map from $\mathbb{F}_{2^m}$ onto $\mathbb{F}_2$.
\end{lemma}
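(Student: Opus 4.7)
The plan is to prove the lemma in the standard two-step fashion for a Hilbert 90 style statement in characteristic $2$: the ``only if'' direction is immediate from Galois invariance of the trace, while the ``if'' direction follows from an $\mathbb{F}_2$-linear dimension count in which the hypothesis $\gcd(j,m)=1$ plays the decisive role.

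For the forward direction I would simply note that the map $\sigma : x \mapsto x^{2^j}$ belongs to $\mathrm{Gal}(\mathbb{F}_{2^m}/\mathbb{F}_2)$, hence $Tr_2^{2^m}(\sigma(\beta)) = Tr_2^{2^m}(\beta)$ for every $\beta \in \mathbb{F}_{2^m}$. By $\mathbb{F}_2$-linearity of the trace this gives
\[
Tr_2^{2^m}(\beta^{2^j} - \beta) = Tr_2^{2^m}(\beta^{2^j}) - Tr_2^{2^m}(\beta) = 0,
\]
so every element of the form $\beta^{2^j}-\beta$ has zero absolute trace, and this direction does not even use the coprimality assumption.

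For the converse I would introduce the $\mathbb{F}_2$-linear endomorphism
\[
L : \mathbb{F}_{2^m} \to \mathbb{F}_{2^m}, \qquad L(\beta) = \beta^{2^j} - \beta,
\]
and show that its image coincides with $\ker Tr_2^{2^m}$ by comparing $\mathbb{F}_2$-dimensions. The kernel of $L$ is $\{\beta : \beta^{2^j} = \beta\} = \mathbb{F}_{2^{\gcd(j,m)}} = \mathbb{F}_2$, which is precisely where the hypothesis $\gcd(j,m)=1$ enters; hence $\dim_{\mathbb{F}_2} \ker L = 1$ and so $\dim_{\mathbb{F}_2}\mathrm{Im}(L) = m-1$. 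On the other hand, since the absolute trace is surjective onto $\mathbb{F}_2$, its kernel also has $\mathbb{F}_2$-dimension $m-1$. Combining this with the inclusion $\mathrm{Im}(L) \subseteq \ker Tr_2^{2^m}$ already established in the forward direction forces equality, so every $\alpha$ with $Tr_2^{2^m}(\alpha)=0$ can indeed be written as $\beta^{2^j}-\beta$ for some $\beta \in \mathbb{F}_{2^m}$.

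There is no substantial obstacle here, as the argument is textbook; the only point requiring care is invoking $\gcd(j,m)=1$ at exactly the right moment, namely in identifying the fixed field of $x\mapsto x^{2^j}$ with $\mathbb{F}_2$ so that $\ker L$ attains the minimal $\mathbb{F}_2$-dimension $1$, allowing the dimensions of $\mathrm{Im}(L)$ and $\ker Tr_2^{2^m}$ to match.
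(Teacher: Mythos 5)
Your proof is correct and complete: the forward direction by Galois invariance of the absolute trace, and the converse by comparing $\mathbb{F}_2$-dimensions of $\mathrm{Im}(L)$ for $L(\beta)=\beta^{2^j}-\beta$ (whose kernel is $\mathbb{F}_{2^{\gcd(j,m)}}=\mathbb{F}_2$) and of $\ker Tr_2^{2^m}$ (which is $(m-1)$-dimensional by surjectivity of the trace). The paper offers no proof of this lemma at all — it is stated as a well-known consequence of the additive form of Hilbert's Theorem 90 — so there is nothing to compare against; your argument is exactly the standard one that the authors are implicitly invoking, and you correctly isolate where $\gcd(j,m)=1$ is used.
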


\begin{lemma}
Let $b\in \mathbb{F}_{q^2}^*$, and $r$ be such that $\gcd(r,m)=1$. Then, the polynomial
    $\big(bX^{2^r} + 1\big)^{q+1} + X^{q+1}$ has a zero in $\mathbb{F}_{q^2}$.
\end{lemma}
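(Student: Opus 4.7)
I would reduce the problem to a standard application of Hilbert 90 (Lemma~\ref{lm:H90}) combined with an averaging argument over the group $\mu_{q+1}$. The polynomial $P_{0,b}$ has a zero in $\mathbb{F}_{q^2}^*$ if and only if $(bx^{2^r}+1)/x$ lies in $\mu_{q+1}$ for some $x\ne 0$; equivalently, there exist $\lambda\in\mu_{q+1}$ and $x\in\mathbb{F}_{q^2}^*$ with $bx^{2^r}+1=\lambda x$. The hypothesis $\gcd(r,m)=1$ gives $\gcd(2^r-1,q^2-1)=2^{\gcd(r,m)}-1=1$, so for each $\lambda\in\mu_{q+1}$ there is a \emph{unique} $s\in\mathbb{F}_{q^2}^*$ with $s^{2^r-1}=\lambda/b$. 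The substitution $x=sy$ then transforms the equation into the additive one $y^{2^r}+y=1/(\lambda s)$, which by Lemma~\ref{lm:H90} admits a (necessarily nonzero) solution $y\in\mathbb{F}_{q^2}=\mathbb{F}_{2^m}$ precisely when $Tr_2^{2^m}\!\bigl(1/(\lambda s)\bigr)=0$.

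Next I would exploit the freedom in $\lambda$. Let $e$ denote the multiplicative inverse of $2^r-1$ modulo $q^2-1$. A short computation yields
\[
\frac{1}{\lambda s}=b^{e}\,\lambda^{-2^r e}.
\]
Since $\gcd(e,q+1)=1$ (inherited from $\gcd(e,q^2-1)=1$) and $\gcd(2^r,q+1)=1$ (because $q+1$ is odd), the map $\lambda\mapsto\nu:=\lambda^{-2^r e}$ is a bijection of $\mu_{q+1}$ onto itself. Hence it is enough to exhibit some $\nu\in\mu_{q+1}$ with $Tr_2^{2^m}(b^{e}\nu)=0$.

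The decisive move is a simple averaging argument in $\mathbb{F}_2$, and this is the one step that really has content. Vieta applied to $X^{q+1}+1\in\mathbb{F}_2[X]$ gives $\sum_{\nu\in\mu_{q+1}}\nu=0$, so by linearity
\[
\sum_{\nu\in\mu_{q+1}} Tr_2^{2^m}(b^{e}\nu)=Tr_2^{2^m}\!\Bigl(b^{e}\sum_{\nu\in\mu_{q+1}}\nu\Bigr)=0\quad\text{in }\mathbb{F}_2.
\]
On the other hand, $|\mu_{q+1}|=q+1$ is odd (since $q$ is a power of $2$), so it is impossible for every summand to equal $1$: that would give total $1$, not $0$. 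Consequently some $\nu\in\mu_{q+1}$ forces the trace to vanish; Lemma~\ref{lm:H90} then supplies a nonzero $y\in\mathbb{F}_{q^2}$, and $x=sy$ is the desired zero of $P_{0,b}(X)$ in $\mathbb{F}_{q^2}^*$. The only subtle point in the whole plan is the bijectivity of $\lambda\mapsto\lambda^{-2^r e}$ on $\mu_{q+1}$, which is exactly what prevents the averaging step from being vacuous; everything else is a routine substitution.
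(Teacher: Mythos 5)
Your proof is correct, and while it shares the paper's skeleton (rewrite the condition as $bx^{2^r}+\lambda x+1=0$ for some $\lambda\in\mu_{q+1}$, normalize to an Artin--Schreier equation $y^{2^r}+y=\text{const}$, and invoke Hilbert 90), the decisive step is handled by a genuinely different mechanism. The paper makes one explicit choice of $\lambda$: writing $b=u't$ with $t\in\mathbb{F}_q^*$ and $u'\in\mu_{q+1}$, it takes $u=u'^{2^{-r}}$, which forces the Artin--Schreier constant to land in the subfield $\mathbb{F}_q$, where $Tr_2^{q^2}$ vanishes automatically (the relative trace of an $\mathbb{F}_q$-element to $\mathbb{F}_q$ is $2\alpha=0$). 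You instead keep $\lambda$ free, show the constants $1/(\lambda s)=b^e\lambda^{-2^re}$ sweep out the full coset $b^e\mu_{q+1}$ as $\lambda$ ranges over $\mu_{q+1}$, and then use the parity argument $\sum_{\nu\in\mu_{q+1}}Tr_2^{2^m}(b^e\nu)=0$ with $|\mu_{q+1}|$ odd to force some trace to vanish. All the supporting claims check out: $-1-e\equiv-2^re\pmod{q^2-1}$ follows from $e(2^r-1)\equiv 1$, and the bijectivity of $\lambda\mapsto\lambda^{-2^re}$ on $\mu_{q+1}$ follows from $\gcd(e,q^2-1)=1$ and $q+1$ odd. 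The paper's route is shorter and fully constructive in $\lambda$; yours is slightly longer but arguably more robust, since the averaging argument does not depend on the special coincidence that a suitable power of the $\mu_{q+1}$-part of $b$ pushes the constant into $\mathbb{F}_q$, and it would adapt to situations where no such normalization is available.
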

\begin{proof}
    We note that $\big(bX^{2^r} + 1\big)^{q+1} + X^{q+1}$ has a zero in $\mathbb{F}_{q^2}$ if and only if there exist $x\in\mathbb{F}_{q^2}$ and $u\in\mu_{q+1}$ such that 
    \begin{equation}\label{eq:c0}
    bx^{2^r} + ux+1=0.
    \end{equation}
    Now, $b=u't$ for some $t\in\mathbb{F}_{q}^*$ and $u' \in \mu_{q+1}$. Therefore, performing the substitution $x\mapsto b^{-2^{-r}}x$ and considering $u=u'^{2^{-r}}\in\mu_{q+1}$, Equation \eqref{eq:c0} becomes
    \begin{equation}\label{eq:c02}
        x^{2^r}+t'x+1=0,
    \end{equation}
    where $t'=t^{-2^{-r}}$.

    Let us note that $\gcd(2^r-1,q-1)=1$, so there exists $\bar{t}\in\mathbb{F}_q^*$ such that $\bar{t}^{2^r-1}=t'$. Therefore, substituting $x\mapsto \bar tx $ in Equation \eqref{eq:c02} and dividing by $\bar t ^{2^r}$ we obtain 
    $$
    x^{2^r}+x=\bar{t}^{-2^r}.
    $$
    Now, since $\bar{t}^{-2^r}$ is an element of $\mathbb{F}_q$ we have that $Tr_{2}^{q^2}\left(\bar{t}^{-2^r}\right)=0$. Hence, being $\gcd(r,m)=1$, from Lemma \ref{lm:H90} we have that there exists an element in $\mathbb{F}_{q^2}$ such that $x^{2^r}+x=\bar{t}^{-2^r}$.
\end{proof}

As a consequences we get the following:
\begin{theorem}\label{th:b0}
Let $n = 2m$ with $m$ even, and let $r < m/2$ be such that $\gcd(r,m) = 1$.  Then, for any $b\in \mathbb{F}_{2^{m}}^*$, the function $f_{b,0,r}(x,y)$ defined as in Theorem \ref{Th:APN} is not APN.
\end{theorem}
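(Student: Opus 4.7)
The plan is essentially to read off Theorem \ref{th:b0} from the preceding lemma via the characterization in Theorem \ref{Th:APN}; no new work is needed, just bookkeeping. First I would note that, since $q=2^{m/2}$, we have $\mathbb{F}_{q^2}=\mathbb{F}_{2^m}$, so the assertions ``$P_{0,b}$ has a zero in $\mathbb{F}_{q^2}$'' and ``$P_{0,b}$ has a zero in $\mathbb{F}_{2^m}$'' are identical, and the parameter sets $\mathbb{F}_{q^2}^*$ and $\mathbb{F}_{2^m}^*$ coincide. The hypotheses of Theorem \ref{th:b0} ($n=2m$ with $m$ even, $r<m/2$, $\gcd(r,m)=1$, $b\in\mathbb{F}_{2^m}^*$) therefore match exactly the hypotheses needed to apply both Theorem \ref{Th:APN} and the preceding lemma.

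Given this identification, the preceding lemma tells us that for every $b\in\mathbb{F}_{2^m}^*$ with $\gcd(r,m)=1$, the polynomial $P_{0,b}(X)=(bX^{2^r}+1)^{q+1}+X^{q+1}$ has at least one zero in $\mathbb{F}_{2^m}$. Theorem \ref{Th:APN}, applied with $c=0$, asserts that under the stated hypotheses $f_{b,0,r}$ is APN if and only if $P_{0,b}$ has no zero in $\mathbb{F}_{2^m}$. Combining these two facts yields immediately that $f_{b,0,r}$ fails to be APN for every $b\in\mathbb{F}_{2^m}^*$, which is the statement of Theorem \ref{th:b0}. There is no genuine obstacle here: all the substantive content has been absorbed into the preceding lemma, whose core step is the reduction, via the substitutions $x\mapsto b^{-2^{-r}}x$ and then $x\mapsto \bar t x$ (using $\gcd(2^r-1,q-1)=1$ to extract a $(2^r-1)$-th root of $t'$), to the additive equation $x^{2^r}+x=\bar t^{-2^r}$, which has a solution in $\mathbb{F}_{q^2}$ by Hilbert 90 (Lemma \ref{lm:H90}) since $\bar t^{-2^r}\in\mathbb{F}_q$ has trace zero over $\mathbb{F}_2$.
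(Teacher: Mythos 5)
Your proposal is correct and coincides with the paper's treatment: Theorem \ref{th:b0} is stated there simply as a consequence of the preceding lemma (which guarantees a zero of $P_{0,b}$ in $\mathbb{F}_{q^2}=\mathbb{F}_{2^m}$) combined with the criterion of Theorem \ref{Th:APN} specialized to $c=0$. The identification $\mathbb{F}_{q^2}=\mathbb{F}_{2^m}$ and the matching of hypotheses are exactly the bookkeeping the paper leaves implicit.
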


\section{On the irriducibility of $\mathcal{C}_{b,c,r}$}

\begin{theorem}\label{th:Cbc}
    The curve $\mathcal{C}_{b,c,r} : F_{b,c,r}(X,Y)=0$, where 
    \begin{align*}
    	F_{b,c,r}(X,Y)=\left(c X^{2^r+1}+bX^{2^r}+1\right)\left(c^q Y^{2^r+1}+b^qY^{2^r}+1\right)-XY,
    \end{align*}
    $b,c \in \mathbb{F}_{q^2}$, $c\neq 0$, $r\geq1$, is absolutely irreducible for any $b,c\in \mathbb{F}_{q^2}$.
\end{theorem}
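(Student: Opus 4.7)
The plan is to argue by contradiction: suppose $F := F_{b,c,r} = G \cdot H$ is a non-trivial factorization in $\overline{\F_{q^2}}[X,Y]$, and set $a := \deg_X G$, $b := \deg_Y G$, $a' := \deg_X H$, $b' := \deg_Y H$. A preliminary observation rules out univariate factors: if for instance $G = G(Y)$ then $G \mid F(0,Y) = B(Y)$, and for any root $\beta$ of $G$ we would need $F(X,\beta) = -\beta X \equiv 0$, forcing $\beta = 0$ and contradicting $B(0) = 1$. So $a, b, a', b' \ge 1$.

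The core analysis treats the two points of $\mathcal{C}_{b,c,r}$ at infinity. Since $F^*(X,Y,0) = c^{q+1} X^{2^r+1} Y^{2^r+1}$, the curve meets the line at infinity only at $P_1 = (1{:}0{:}0)$ and $P_2 = (0{:}1{:}0)$. In the chart $X = 1$ with coordinates $(y, z) = (Y/X, Z/X)$, the lowest-degree part of $F^*(1, y, z)$ is the tangent cone $T_1(y,z) = c\, y^{2^r+1}\phi(z/y)$, where $\phi(\lambda) := \lambda^{2^r+1} + b^q \lambda + c^q$. A short characteristic-$2$ derivative check ($\phi'(\lambda) = \lambda^{2^r} + b^q$; the unique candidate multiple root $\lambda_0$ with $\lambda_0^{2^r} = b^q$ gives $\phi(\lambda_0) = c^q \ne 0$) shows $\phi$ is separable with $2^r+1$ distinct nonzero roots $\lambda_i$, so $P_1$ is an ordinary $(2^r+1)$-fold point with distinct tangent lines $\{z = \lambda_i y\}$; symmetrically, $P_2$ is ordinary with the separable tangent polynomial $\psi(\mu) := \mu^{2^r+1} + b\mu + c$. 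Because $F^*(X,Y,0)$ is a monomial, the leading forms of $G$ and $H$ at infinity are also monomials $\alpha X^a Y^b$ and $\alpha' X^{a'} Y^{b'}$, forcing $a + a' = b + b' = 2^r+1$. Factoring $T_1 = T_1^G \cdot T_1^H$ expresses $T_1^G$ as a product over a subset $S_1^G$ of the linear forms $(z - \lambda_i y)$; the coefficient of $y^{|S_1^G|}$ in $T_1^G$ is nonzero and must come from a top-degree monomial of $G$, the only such monomial being $\alpha X^a Y^b$. So $|S_1^G| = b$, i.e.\ $\mathrm{mult}_{P_1}(G) = b$, and symmetrically $\mathrm{mult}_{P_2}(G) = a$.

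Next I would use the projection $\pi_Y\colon \mathcal{C}_{b,c,r} \to \mathbb{P}^1_Y$ of degree $2^r+1$. For each root $\beta_j$ of $B$, the identity $F(X,\beta_j) = -\beta_j X$ gives the unramified affine point $(0,\beta_j)$, while parameterizing the $i$-th branch at $P_1$ as $y = t$, $z = \lambda_i t + \cdots$ yields $Y = 1/\lambda_i + O(t^{2^r})$, and the bijection $\lambda_i \leftrightarrow 1/\lambda_i$ (from $\phi(1/\beta) = 0 \Leftrightarrow B(\beta) = 0$) picks out exactly one branch at $P_1$ mapping over $\beta_j$, with ramification $2^r$. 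So in the normalization the $\pi_Y$-fibre over $\beta_j$ has two geometric points of multiplicities $1$ and $2^r$. Restricting $\pi_Y$ to the normalization $\tilde G$ of $G$ gives a finite surjective map of constant degree $a$; since the fibre multiplicity over each $\beta_j$ is drawn from $\{0, 1, 2^r, 2^r+1\}$ and $a \in \{1, \ldots, 2^r\}$, the only possibilities are $a = 1$ (with $(0,\beta_j) \in \tilde G$ for every $j$) or $a = 2^r$ (with the matching branch in $\tilde G$ for every $j$).

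The main obstacle — and the finishing blow — is ruling out both of these two cases. If $a = 1$, then $(0,\beta_j) \in \tilde G$ for every $j$, whence $B(Y) \mid G(0,Y)$; but $\deg_Y G(0,Y) \le b \le 2^r < 2^r+1 = \deg B$, a contradiction. If $a = 2^r$, then every branch of $\mathcal{C}_{b,c,r}$ at $P_1$ lies on $\tilde G$, so $\mathrm{mult}_{P_1}(G) = 2^r+1$; combined with the identity $\mathrm{mult}_{P_1}(G) = b$ established above this forces $b = 2^r+1$, again contradicting $b \le 2^r$. Both cases fail, so no non-trivial factorization of $F_{b,c,r}$ exists and $\mathcal{C}_{b,c,r}$ is absolutely irreducible.
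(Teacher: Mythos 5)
Your argument is correct in outline but proceeds by a genuinely different, more geometric route than the paper's. The paper's proof is entirely elementary: it writes a putative factorization $F_{b,c,r}=GH$ as polynomials in $Y$ over $\overline{\mathbb{F}_q}[X]$, uses that the top coefficients satisfy $g_0h_0=c^q(cX^{2^r+1}+bX^{2^r}+1)$ with $\gcd(g_0,h_0)=1$, and then compares coefficients of successive powers of $Y$ to show by induction that $g_0$ (or $h_0$) divides every coefficient, producing a forbidden univariate factor. You instead exploit the geometry at infinity: the two points $P_1,P_2$ are ordinary $(2^r+1)$-fold points with separable tangent cones, the leading form of $F$ being a monomial pins down $\mathrm{mult}_{P_1}(G)=\deg_Y G$ and $\mathrm{mult}_{P_2}(G)=\deg_X G$, and a fibre count for the projection $\pi_Y$ over the roots of $B(Y)=c^qY^{2^r+1}+b^qY^{2^r}+1$ forces $\deg_X G\in\{1,2^r\}$, both of which you kill. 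The geometric route gives more (it identifies the singularity structure and the ramification of $\pi_Y$, which would also feed into a genus computation), at the cost of heavier machinery; the paper's route is longer to write but uses nothing beyond polynomial arithmetic.

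There is one step you assert rather than prove, and it is the quantitative heart of your fibre argument: the claim that the branch of $\mathcal{C}_{b,c,r}$ at $P_1$ with tangent direction $\lambda_i=1/\beta_j$ satisfies $Y=1/\lambda_i+O(t^{2^r})$, i.e.\ has ramification index exactly $2^r$ for $\pi_Y$. This does not follow from the first-order parametrization $z=\lambda_i t+\cdots$ alone; a priori $Y-1/\lambda_i$ could vanish to any order $\ge 1$. You need either (i) a B\'ezout count on the line $Y=\beta_j Z$ through $P_1$, which meets the affine curve only at the smooth point $(0,\beta_j)$ with multiplicity $1$, leaving intersection number $2^{r+1}+1$ at $P_1$, hence $2^r+1$ on the tangent branch and $2^r$ after dividing by $\mathrm{ord}_t(z)=1$; or (ii) a direct Newton-polygon computation in the coordinates $(w,s)=(1/X,\,Y-\beta_j)$, where the local equation has lowest terms $c^{q+1}\beta_j^{2^r}s$ and $\beta_j w^{2^r}$, forcing $\mathrm{ord}_t(s)=2^r$. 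Route (i) additionally requires knowing that $F_{b,c,r}$ is squarefree (so that branches are counted once); this is also unaddressed but easy, since $F+XF_X=B(Y)(bX^{2^r}+1)$ in characteristic $2$, so any repeated factor would be univariate, which your preliminary observation excludes. With either repair (and taking $G$ to be an irreducible factor so that ``constant fibre degree $a$'' is literally true for the normalization), the proof is complete.
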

\begin{proof}

    First observe that there is no non-constant factor $G(X,Y)\in \overline{\mathbb{F}_{q}}[X,Y]$ of $F_{b,c,r}$ of degree $0$ in $X$ or $Y$. By way of contradiction suppose that $G(X) \mid F_{b,c,r}(X,Y)$: then $G(X)\mid GCD((c X^{2^r+1}+bX^{2^r}+1),X)=1$, a contradiction to $G$ being non-trivial.

    Consider $F_{b,c,r}(X,Y)=G(X,Y)H(X,Y)$, $G,H\in \overline{\mathbb{F}_{q}}[X,Y]$. To be more explicit,
    \begin{eqnarray*}
       G(X,Y) &:=& g_0(X)Y^s+g_1(X)Y^{s-1}+\cdots + g_s(X);\\
       H(X,Y) &:=& h_0(X)Y^{2^r+1-s}+h_1(X)Y^{2^r+1-s-1}+\cdots + h_{2^r+1-s}(X),
    \end{eqnarray*}
    where each $g_i$ and $h_i$ are polynomials in $X$. Without loss of generality, we can suppose that $1\leq s\leq 2^{r-1}$. Also 
    \begin{align*}
        g_0(X)h_0(X)=c^q(c X^{2^r+1}+bX^{2^r}+1)
    \end{align*}
    and thus, since $c\neq 0$, $GCD(g_0(X),h_0(X))=1$. Comparing the coefficient of $Y^{2^r}$ in $G(X,Y)H(X,Y)$ and in $F_{b,c,r}(X,Y)$ we deduce
    $$g_0(X)h_1(X)+g_1(X)h_0(X)=b^q (c X^{2^r+1}+bX^{2^r}+1).$$
    \begin{enumerate}
        \item In the case where $\deg(g_0(X))>0$, from the equations above we deduce that $g_0(X)\mid g_1(X)h_0(X)$ and thus $g_0(X)\mid g_1(X)$ since $g_0$ and $h_0$ are coprime.

        Consider now the coefficient of $Y^{2^r+1-\ell}$, $\ell\in [2 \ldots s]$, in  $G(X,Y)H(X,Y)$ and in $F_{b,c,r}(X,Y)$. Since they must coincide and $2^r+1-s\geq 2^r+1-2^{r-1}=2^{r-1}+1>1$, we have that
        $$\sum_{i=0}^{\ell}g_i(X)h_{\ell-i}(Y) =0.$$
        Thus, using induction, $g_0(X)\mid g_\ell(X)$, $\ell=2,\ldots, s$. Therefore $g_0(X)\mid G(X,Y)$ and we find a non-costant factor of $F_{b,c,r}$ of degree $0$ in $Y$, a contradiction. 

        \item In the case where $\deg(g_0(X))=0$, then $h_0(X)=\lambda (c X^{2^r+1}+bX^{2^r}+1)$ for some $\lambda \in \overline{\mathbb{F}_q}^*$.  

        If $s=1$, then $G(X,Y)=g_0 Y+g_1(X)$. This means that $F_{b,c,r}(X,\widetilde{g}(X))\equiv 0$ for some $\widetilde{g}(X)\in \overline{\mathbb{F}_q}[X]$. Let $\alpha\geq 0 $ be the degree of $\widetilde{g}(X)$. From $F_{b,c,r}(X,\widetilde{g}(X))\equiv 0$ we deduce that 
        $$2^r+1+\alpha(2^r+1)=\alpha+1, $$
        a contradiction to $\alpha\geq 0$.

        Thus $s\geq 2$. Arguing as above, we consider  the coefficient of $Y^{2^r+1-\ell}$, $\ell\in [2 \ldots s]$, in  $G(X,Y)H(X,Y)$ and in $F_{b,c,r}(X,Y)$. Using induction, $h_0(X)\mid h_\ell(X)$, $\ell=2,\ldots, s$. 
        Also, the coefficient of $Y^{2^r+1-s-\ell}$, $\ell\in [1 \ldots 2^r+1-2s]$, in $G(X,Y)H(X,Y)$ is 
        $$g_0(X) h_{s+\ell}(X)+g_1(X)h_{s+\ell-1}(X)+\cdots+g_s(X)h_{\ell}(X).$$
        Since it vanishes, we obtain, again by induction, $h_0(X)\mid h_{s+\ell}(X)$, for $\ell\leq 2^r+1-2s$. Thus $h_0(X)\mid H(X,Y)$ and again we find a non-costant factor of $F_{b,c,r}$ of degree $0$ in $Y$, a contradiction.         
    \end{enumerate}
\end{proof}

Therefore, we get the following non-existence result.
\begin{theorem}\label{th:main}
    Let $m\ge 2$ be an even integer. Then, 
    \begin{enumerate}
        \item for any $b\in\mathbb{F}_{2^m}$, $c\in \mathbb{F}_{2^m}^*$ and $r< m/8-1$ the function $f_{b,c,r}$ is not APN;
        \item for any $b\in\mathbb{F}_{2^m}^*$ and $r< m/2$ the function $f_{b,0,r}$ is not APN;
    \end{enumerate}
\end{theorem}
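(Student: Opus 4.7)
The plan is to combine the results that have already been assembled. Part (2) of the statement is exactly the content of Theorem \ref{th:b0}, so only part (1) requires a new argument. Fix $b \in \mathbb{F}_{2^m}$, $c \in \mathbb{F}_{2^m}^{*} = \mathbb{F}_{q^2}^{*}$ and $r$ with $\gcd(r,m) = 1$ and $r < m/8 - 1$. By Theorem \ref{Th:APN} it suffices to exhibit a zero of $P_{c,b}(X)$ in $\mathbb{F}_{q^2}$, and, by the reduction carried out in the introduction, this in turn reduces to producing an $\mathbb{F}_q$-rational affine point on the curve $\mathcal{D}_{b,c,r}$. (The origin $(0,0)$ never lies on $\mathcal{D}_{b,c,r}$, so any such affine point automatically corresponds to a nonzero $x \in \mathbb{F}_{q^2}$.)

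Next I would observe that $\mathcal{D}_{b,c,r}$ is itself defined over $\mathbb{F}_q$: applying the Frobenius $t \mapsto t^q$ to its defining polynomial swaps the two bracketed factors together with the pair $(\xi,\xi^q)$, leaving the polynomial fixed. The change of variables $(X+\xi Y,\, X+\xi^q Y) \mapsto (X,Y)$ is $\overline{\mathbb{F}_q}$-linear and invertible, so the absolute irreducibility of $\mathcal{C}_{b,c,r}$ established in Theorem \ref{th:Cbc} transfers to $\mathcal{D}_{b,c,r}$, and the degree $d = 2(2^r + 1) = 2^{r+1} + 2$ is preserved. Theorem \ref{Th:AubryPerret} then yields
\begin{equation*}
\#\mathcal{D}_{b,c,r}(\mathbb{F}_q) \;\geq\; q + 1 - (d-1)(d-2)\sqrt{q}, \qquad (d-1)(d-2) = 2^{2r+2} + 2^{r+1}.
\end{equation*}

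I would then verify that every $\mathbb{F}_q$-rational point of $\mathcal{D}_{b,c,r}$ is affine. The leading homogeneous form of its defining polynomial is $c^{q+1}(X+\xi Y)^{2^r+1}(X+\xi^q Y)^{2^r+1}$, so the two points at infinity are $[\xi:1:0]$ and $[\xi^q:1:0]$, both of which lie outside $\mathbb{P}^2(\mathbb{F}_q)$ since $\xi \notin \mathbb{F}_q$. It therefore suffices to check that $q + 1 > (d-1)(d-2)\sqrt{q}$ whenever $r < m/8 - 1$. With $q = 2^{m/2}$ this is an elementary estimate: because $m$ is even and $r < m/8-1$ with $r$ an integer, we have $m \geq 8r + 10$, hence $\sqrt{q} = 2^{m/4} \geq \sqrt{2}\cdot 2^{2r+2}$, and the inequality then follows from $\sqrt{2} - 1 > 2^{-r-1}$, valid for every $r \geq 1$.

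No genuinely hard step remains after Theorem \ref{th:Cbc}, which is where the nontrivial work is concentrated. The parts that need a little care are the observation that $\mathcal{D}_{b,c,r}$ is $\mathbb{F}_q$-rational (so that Aubry-Perret applies over $\mathbb{F}_q$ and not only over $\mathbb{F}_{q^2}$), the description of the points at infinity showing that none is $\mathbb{F}_q$-rational, and the verification of the Hasse-Weil inequality at the boundary case $m = 8r + 10$; this boundary is the tightest and is precisely what forces the hypothesis to take the form $r < m/8 - 1$.
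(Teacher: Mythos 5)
Your proposal is correct and follows essentially the same route as the paper: part (2) from Theorem \ref{th:b0}, and part (1) by transferring the absolute irreducibility of $\mathcal{C}_{b,c,r}$ to $\mathcal{D}_{b,c,r}$, applying the Aubry--Perret bound with $d=2^{r+1}+2$, and checking the inequality under $r<m/8-1$. The only (harmless) difference is that you observe the two points at infinity $[\xi:1:0]$, $[\xi^q:1:0]$ are not $\mathbb{F}_q$-rational, whereas the paper simply subtracts $2$ from the point count; both yield the conclusion.
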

\begin{proof}
Let $q=2^{m/2}$. From Theorem \ref{th:Cbc} we get that the curve $\mathcal{D}_{b,c,r}$, defined over $\mathbb{F}_q$, is absolutely irreducible of order $d=2^{r+1}+2$. Now, applying the Hasse-Weil bound (Theorem \ref{Th:AubryPerret}), noting that the curve has two points at the infinity, we get that the number of $\mathbb{F}_q$-rational (affine) points of  $\mathcal{D}_{b,c,r}$ are at least $q+1-(d-1)(d-2)\sqrt{q}-2$.
It is easy to see that if $r<\frac{m}{8}-1$  we have $ q+1-(d-1)(d-2)\sqrt{q}-2>0$.

The second case comes from Theorem \ref{th:b0}.
\end{proof}

\begin{remark}
For the case $r=1$, Theorem \ref{th:main} implies that for $m\ge 18$ the function $f_{b,c,1}$ cannot be APN for any choice of $b, c\in\mathbb{F}_{2^m}$. 
\end{remark}

In \cite{APN}, the authors show that for $m\le 6$, we have instance of APN functions coming from Theorem \ref{Th:APN} for $r=1$. To check that $f_{b,c,1}$ cannot be APN for $8 \le m\le 16$ we need the following proposition which allows us to reduce the number of pairs $(b,c)$.
\begin{prop}\label{prop:red}
    Let $k\ge 0$ be an integer, and $u\in\mu_{q+1}$. Then, for any $b,c\in\mathbb{F}_{q^2}$ the equation
    \begin{equation}\label{eq:sol1}
            \big(cx^{2^r+1} + bx^{2^r} + 1\big)^{q+1} + x^{q+1}=0
    \end{equation}

    admits a solution over $\mathbb{F}_{q^2}$ if and only if 
    \begin{equation}\label{eq:sol2}
    \big(c^{2^k}u^{2^k(2^r+1)}x^{2^r+1} + b^{2^k}u^{2^{k+r}}x^{2^r} + 1\big)^{q+1} + x^{q+1}=0
\end{equation}
    has a solution.
\end{prop}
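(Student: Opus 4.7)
The plan is to combine two invertible substitutions, each of which preserves solvability over $\mathbb{F}_{q^2}$ of equations of the shape $(cx^{2^r+1}+bx^{2^r}+1)^{q+1}+x^{q+1}=0$: a multiplicative twist $x=uy$ with $u\in\mu_{q+1}$, and a Frobenius reparametrisation $z=y^{2^k}$ obtained by raising both sides to the $2^k$-th power. Applying the twist first and the Frobenius second yields exactly the transformation of coefficients appearing in the statement.

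First I would substitute $x=uy$ in \eqref{eq:sol1}. Since $u^{q+1}=1$, the term $x^{q+1}$ becomes $y^{q+1}$, while the inner polynomial becomes $cu^{2^r+1}y^{2^r+1}+bu^{2^r}y^{2^r}+1$. Because multiplication by $u$ is a bijection of $\mathbb{F}_{q^2}$, \eqref{eq:sol1} has a solution in $\mathbb{F}_{q^2}$ if and only if the intermediate equation
$$\bigl(cu^{2^r+1}y^{2^r+1}+bu^{2^r}y^{2^r}+1\bigr)^{q+1}+y^{q+1}=0$$
has a solution in $\mathbb{F}_{q^2}$.

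Next I would raise both sides of this intermediate equation to the $2^k$-th power. In characteristic $2$ the map $a\mapsto a^{2^k}$ is a ring homomorphism (and an automorphism of $\mathbb{F}_{q^2}$), so it distributes over the inner sum and commutes with the outer $(q+1)$-th power. Using $(cu^{2^r+1})^{2^k}=c^{2^k}u^{2^k(2^r+1)}$ and $(bu^{2^r})^{2^k}=b^{2^k}u^{2^{k+r}}$, and setting $z=y^{2^k}$, the resulting equation is precisely \eqref{eq:sol2}. Since $y\mapsto y^{2^k}$ is a bijection of $\mathbb{F}_{q^2}$, solvability is again preserved.

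Chaining the two equivalences yields the desired ``if and only if''. There is essentially no obstacle: the argument is pure bookkeeping, and the two facts doing the work are the norm-one relation $u^{q+1}=1$ and the bijectivity of both substitutions on $\mathbb{F}_{q^2}$.
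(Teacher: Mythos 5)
Your proof is correct and is essentially the paper's argument: the paper performs the composite substitution $x\mapsto ux^{2^{-k}}$ and then raises to the $2^k$-th power, which is exactly your two steps (multiplicative twist by $u$, then Frobenius) chained together. The key points — $u^{q+1}=1$ and the bijectivity of both maps on $\mathbb{F}_{q^2}$ — are the same in both versions.
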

\begin{proof}
If we perform the change of variable $x\mapsto ux^{2^{-k}}$, then \eqref{eq:sol1} becomes
$$
\big(cu^{(2^r+1)}x^{2^{-k}(2^r+1)} + bu^{2^{r}}x^{2^{r-k}} + 1\big)^{q+1} + x^{2^{-k}(q+1)}=0.
$$
Now, raising it to the power of $2^k$ we get \eqref{eq:sol2}.
\end{proof}

{\begin{remark}
    Proposition \ref{prop:red} permits to reduce the number of pairs $(b,c)\in\mathbb{F}_{q^2}\times \mathbb{F}_{q^2}^*$, and thus of polynomials $P_{c,b}(X)$, for checking the existence of an APN function as in Theorem \ref{Th:APN}. Indeed, let $b\in \mathbb{F}_{q^2}$, and let $B_b:=\{b^{2^k}\cdot u^{2^{r+k}}\,:\, u \in\mu_{q+1}, \, 0\le k\le m\}$. Then, Proposition \ref{prop:red} implies that if for any $c\in \mathbb{F}_{q^2}^*$ the polynomial $P_{c,b}(X)$ admits always a solution in $\mathbb{F}_{q^2}$, then for any $b'\in B_b$ we have that $P_{c,b'}(X)$ admits always a solution in $\mathbb{F}_{q^2}$ for any $c\in \mathbb{F}_{q^2}^*$.

Therefore, we can partition $\mathbb{F}_{q^2}$ in sets of type $B_b$ and restrict the analysis to one representative for each set. For example, let $m=16$, using Proposition \ref{prop:red} we reduce the analysis to $36\cdot (2^{16}-1)$ pairs instead of $2^{16}\cdot (2^{16}-1)$.
\end{remark}}


By \eqref{eq0}, the existence of a root of the polynomial  
\[
P_{c,b}(X) = \bigl(cX^{2^r+1} + bX^{2^r} + 1\bigr)^{q+1} + X^{q+1}
\]  
is equivalent to the existence of an element $u \in \mu_{q+1}$ such that the equation  
\[
cx^{2^r+1} + bx^{2^r} + ux + 1 = 0
\]  
admits a root in $\mathbb{F}_{q^2}$. This equation can be transformed into  
\begin{equation}\label{eq:projpol}
x^{2^r+1} + x + A = 0,
\end{equation}  
where  
\[
    A = \frac{(ub+c)c^{2^r-1}}{\bigl(uc^{2^r-1} + b^{2^r}\bigr)^{2^{-r}+1}},  
\]
under the assumption that $uc^{2^r-1}+b^{2^r} \neq 0$, see for instance \cite{Bluher}. In \cite[Theorem 2.1]{BTThex} it has been proved that equation \eqref{eq:projpol} admits no solution over $\mathbb{F}_{q^2}$ if and only if  
\begin{equation}\label{eq:condition}
A = \frac{a(a+1)^{2^r+2^{-r}}}{(a+a^{2^{-r}})^{2^r+1}},
\end{equation}
for some non-cube $a$.
For the case $r=1$, the previous request is equivalent to ask that 
\begin{align*}
A=a+\frac{1}{a},
\end{align*}
for some non-cube $a$.
So, for $r=1$, using MAGMA \cite{Magma} it is possible to check that one can always find some $u \in \mu_{q+1}$ such that {$uc+b^{2} \neq 0$ and} the associated value of $A$ does \emph{not} belong to the set  
\[
\Biggl\{ a+\frac{1}{a} : a \text{ not a cube} \Biggr\},
\]
for any choice of $b,c\in\mathbb{F}_{2^m}$ and $8\le m\le 16$. Therefore, the function $f_{b,c,1}$ cannot be APN. So, we get the following result.
\begin{cor}
      Let $m\ge 8$ be an even integer. Then, for any choice $b,c \in \mathbb{F}_{2^m}$ the function $f_{b,c,1}$ as in Theorem \ref{Th:APN} is not APN.
\end{cor}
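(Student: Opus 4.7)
The plan is to split into ranges of $m$, combining the general non-existence theorem with a finite computer verification for the small dimensions it misses.

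First, I would dispose of all but finitely many $m$. By Theorem \ref{th:main}(1) applied with $r=1$, the function $f_{b,c,1}$ is not APN whenever $c\ne 0$ and $1 < m/8 - 1$, i.e., $m\ge 18$; Theorem \ref{th:main}(2) handles $c = 0$ for every $m\ge 8$. What remains is the finite list $m\in\{8,10,12,14,16\}$ with $c \in \mathbb{F}_{2^m}^*$ and arbitrary $b \in \mathbb{F}_{2^m}$, which I would settle by a direct check.

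Second, I would translate the ``not APN'' condition into an equivalent form suited to a finite sweep. By Theorem \ref{Th:APN}, it suffices to show $P_{c,b}(X)$ has a root in $\mathbb{F}_{q^2}$. As recalled in the excerpt, this is equivalent to the existence of some $u\in\mu_{q+1}$ with $uc + b^{2}\ne 0$ such that the associated quantity
\[
A \;=\; \frac{(ub+c)\,c}{(uc + b^{2})^{2^{-1}+1}}
\]
does \emph{not} belong to $\mathcal{N} := \{a+a^{-1} : a\in \mathbb{F}_{q^2}^*,\ a\ \text{non-cube}\}$, by \cite[Theorem~2.1]{BTThex} specialized to $r=1$. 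So the task reduces to exhibiting, for every admissible pair $(b,c)$, a witness $u$ with $A\notin \mathcal{N}$.

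Third, I would carry out the computer sweep. Using Proposition \ref{prop:red} to partition $\mathbb{F}_{q^2}$ into orbits $B_b$, it suffices to test a single representative $b$ per orbit (for $m=16$ this shrinks the $b$-side from $2^{16}$ to $36$). For each such orbit representative and each $c\in\mathbb{F}_{q^2}^*$, I would loop over $u\in\mu_{q+1}$, compute $A$, and verify in \textsc{Magma} that at least one admissible $u$ yields $A\notin\mathcal{N}$, thereby producing a root of $P_{c,b}$ and contradicting the APN property.

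The main obstacle is the $m=16$ case: even after the orbit reduction one has on the order of $36\cdot(2^{16}-1)$ pairs $(b,c)$ and, for each, up to $2^{8}+1$ candidate values of $u$ together with a membership test in $\mathcal{N}$. The practical point requiring care is organising the computation so that it actually terminates in reasonable time, e.g.\ by precomputing $\mathcal{N}$ once per field and short-circuiting the inner $u$-loop at the first witness. Conceptually nothing new is needed: the corollary follows by combining Theorem \ref{th:main}, the Bluher-type characterisation \cite[Theorem~2.1]{BTThex}, and the orbit reduction of Proposition \ref{prop:red}.
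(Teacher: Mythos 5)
Your proposal is correct and follows essentially the same route as the paper: dispose of $m\ge 18$ via Theorem \ref{th:main} (with the $c=0$ case covered for all $m$), then for $8\le m\le 16$ reduce via Proposition \ref{prop:red}, apply the Bluher/Bracken--Tan--Tan characterisation specialised to $r=1$ (where the condition becomes $A\notin\{a+a^{-1}: a \text{ non-cube}\}$), and finish with a \textsc{Magma} sweep. No substantive differences.
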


Moreover, by a computer check we could verify that $f_{b,c,r}$ cannot be APN also for $8\le m\le 22$, and $r<m/2$ with $\gcd(r,m)=1$. {In particular, for any $r<m/2$ and any choice of $b,c$, there exists $u\in\mu_{q+1}$ such that $uc^{2^r-1}+b^{2^r} \neq 0$ and \eqref{eq:condition} is \emph{not} satisfied for any non-cube $a$.}
Therefore, we conjecture the following:
\begin{conj}
    Let $m\ge 8$ be an even integer. Then, for any choice $b,c \in \mathbb{F}_{2^m}$ and $r\le m/2$, $\gcd(r,m)=1$, the function $f_{b,c,r}$ as in Theorem \ref{Th:APN} is not APN.
\end{conj}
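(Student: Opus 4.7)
The plan is to extend the algebraic--geometric approach of Theorem~\ref{th:main} from the range $r < m/8 - 1$ to the full range $r \le m/2$, $\gcd(r,m) = 1$. The case $c = 0$ is already covered by Theorem~\ref{th:b0}, so we may assume $c \neq 0$ and $r$ in the remaining interval $m/8-1 \le r \le m/2$. The obstacle with a direct repetition of the argument is that the curve $\mathcal{C}_{b,c,r}$ has degree $2^{r+1} + 2$, which for $r \ge m/8 - 1$ exceeds $\sqrt{q} = 2^{m/4}$, so the Aubry--Perret bound on $\mathcal{C}_{b,c,r}$ becomes vacuous. One therefore needs a new variety of lower effective degree, or a sharper counting tool.

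The natural candidate is the parametric reformulation via the Bluher--BTT criterion recorded toward the end of the paper: $P_{c,b}$ admits a zero in $\mathbb{F}_{q^2}$ if and only if there exists $u \in \mu_{q+1}$ with $uc^{2^r-1} + b^{2^r} \neq 0$ for which the associated value $A_u = (ub+c)c^{2^r-1}/(uc^{2^r-1}+b^{2^r})^{2^{-r}+1}$ is \emph{not} of the shape $a(a+1)^{2^r + 2^{-r}}/(a + a^{2^{-r}})^{2^r+1}$ for any non-cube $a \in \mathbb{F}_{q^2}$. It therefore suffices to show that the set of ``bad'' $u \in \mu_{q+1}$ (those for which $A_u$ is of this form for some non-cube $a$) has cardinality strictly less than $q$. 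I would encode this by the auxiliary affine curve $\mathcal{B}_{b,c,r}$ with coordinates $(u,a)$ cut out by
\begin{align*}
A_u\,(a + a^{2^{-r}})^{2^r + 1} + a(a + 1)^{2^r + 2^{-r}} = 0
\end{align*}
together with the norm-one constraint $u^{q+1} = 1$. After clearing the inverse Frobenius powers (by raising to the $2^r$-th power and multiplying through by the denominator of $A_u$), this becomes an honest pair of polynomial equations. The aim is to establish absolute irreducibility of the resulting complete intersection by a coefficient-chasing / Hensel-style argument analogous to the one used in the proof of Theorem~\ref{th:Cbc}, then apply Aubry--Perret (Theorem~\ref{Th:AubryPerret}) and finally project onto the $u$-coordinate to conclude that strictly fewer than $q+1$ values of $u$ are bad.

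The hard part is the regime where $r$ is close to $m/2$: in that range the degrees of the defining equations of $\mathcal{B}_{b,c,r}$ are themselves comparable to $q$, so even once absolute irreducibility is in hand the Aubry--Perret bound is still too weak to isolate a good $u$. This is precisely the reason the current theorem stops at $r < m/8 - 1$, and any attack on the full conjecture must bring new structural input. Plausible candidates are: (i) exploiting the multiplicative structure of $\mu_{q+1}$ so that the counting reduces to a norm/trace equation over $\mathbb{F}_q$ of much smaller degree, (ii) reducing the problem by orbit considerations (see Proposition~\ref{prop:red}) and treating distinguished cases where $b$ or $c$ lies in $\mathbb{F}_q$, $\mu_{q+1}$, or their product, and (iii) invoking Weil-type character sum bounds for the multiplicative sums naturally attached to the Bluher parameterization. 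A sensible warm-up, and likely a necessary preliminary, is the complete analysis of the subcase $b = 0$, whose cleaner algebra should indicate which of these ingredients is decisive.
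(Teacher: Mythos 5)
This statement is a \emph{conjecture} in the paper: the authors do not prove it. What the paper actually establishes is (i) the non-existence result for $r < m/8 - 1$ via absolute irreducibility of $\mathcal{C}_{b,c,r}$ and the Aubry--Perret bound (Theorem \ref{th:main}), (ii) the case $c=0$ for all $r$ (Theorem \ref{th:b0}), and (iii) computational verification, using the Bluher/Bracken--Tan--Tan parametrization and Proposition \ref{prop:red}, for $r=1$ with $8 \le m \le 16$ and more generally for $8 \le m \le 22$. The full range $r \le m/2$ for all $m \ge 8$ is left open, and the concluding remarks of the paper sketch essentially the same attack you describe (find $u \in \mu_{q+1}$ whose associated $A$ avoids the set $\{a(a+1)^{2^r+2^{-r}}/(a+a^{2^{-r}})^{2^r+1} : a \text{ non-cube}\}$) as a \emph{possible direction}, not as a proof.

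Your proposal therefore does not close the gap --- and you say so yourself. The decisive step is missing: you need to show that the number of ``bad'' $u \in \mu_{q+1}$ is strictly less than $q+1$, but the curve $\mathcal{B}_{b,c,r}$ you introduce has, after clearing the $2^{-r}$-powers, degree on the order of $2^{2r}$ in $a$, which for $r$ near $m/2$ is comparable to $q^2$. Any Weil/Aubry--Perret estimate then gives an error term of size roughly $\deg^2\sqrt{q} \gg q$, so the count is vacuous exactly in the range the conjecture still needs. The three remedies you list (exploiting the group structure of $\mu_{q+1}$, orbit reductions via Proposition \ref{prop:red}, character-sum bounds) are reasonable research directions but none is carried out, and it is not clear any of them alone suffices: the obstruction is genuinely that the degree of the relevant variety grows exponentially in $r$ while the field size is only $q^2 = 2^m$. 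As written, the proposal is a plausible plan of attack consistent with the authors' own suggestions, but it is not a proof of the statement, which remains open.
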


\section{Concluding remarks}

In this work, using algebraic-geometric tools we proved that the bivariate construction of APN functions introduced in \cite{APN} cannot yield APN functions whenever  
$r < \frac{m}{8} - 1$. 
In particular, for the case $r=1$, this implies that the function cannot be APN for $m \geq 18$. Moreover, by performing computations in \textsf{MAGMA} we established that for $m \geq 8$ there are no APN functions from this class when $r=1$. These results naturally lead to the following conjecture:
\[
\text{For } m \geq 8 \text{ and } r < \frac{m}{2}, \text{ no APN function arises from Theorem \ref{Th:APN}.}
\]
Computationally this conjecture holds for $8\le m\le 22$.

If true, this conjecture implies that the Calderini et al.\ construction does not generate an infinite family of APN functions, but only sporadic examples for $m \leq 6$, where APN functions of this type indeed exist, as shown in \cite{APN}.

\medskip

A possible way to investigate this conjecture could be trying to show that for any choice of parameters $b, c$, one can always find some $u \in \mu_{q+1}$ such that the associated value of $A$, as in \eqref{eq:projpol}, does \emph{not} belong to the set  
\[
\Biggl\{ \frac{a(a+1)^{2^r+2^{-r}}}{(a+a^{2^{-r}})^{2^r+1}} : a \text{ not a cube} \Biggr\}.
\]

An equivalent approach could be that of studying the permutation property of certain linearized polynomials. Indeed, projective polynomials are related to linearized polynomials. In particular, noting that $x^{2^r-1}$ permutes $\mathbb{F}_{2^{m}}$ when $\gcd(r,m)=1$, the polynomial $P_A(X)=X^{2^r+1} + X + A$  has no roots over $\mathbb{F}_{2^{m}}$ if and only if $L_A(X)=X^{2^{2r}} + X^{2^r} + AX$ is a permutation polynomial, since $P_A(X^{2^r-1})X=L_A(X)$. 

Linearized polynomials of this form and their zeros have been studied in several works (see for instance \cite{choelin,HKproj2,MGS}).

\bigskip
{\footnotesize
\noindent\textit{Acknowledgments.}
The research was partially supported by the Italian National Group for Algebraic and Geometric Structures and their Applications (GNSAGA–INdAM).\\
Part of this work was carried out during a Research in Pairs program at the Centro Internazionale per la Ricerca Matematica (CIRM), Trento. The authors gratefully acknowledge CIRM for its financial and logistical support.\\
The research of M. Calderini was partially
supported by MUR–Italy through PRIN 2022RFAZCJ "Algebraic methods in cryptanalysis"}

\end{document}